\documentclass[11pt]{article}
\pdfoutput=1
\usepackage{amssymb,amsthm,amsmath}
\usepackage{graphicx}
\usepackage[small]{caption}
\usepackage{subcaption}
\usepackage{epsfig}

\newcommand{\later}[1]{}
\newcommand{\old}[1]{}

\usepackage{amsfonts}
\usepackage{booktabs}

\usepackage[utf8]{inputenc}
\usepackage{fullpage}
\usepackage{framed}

\usepackage{enumerate}
\usepackage{url}
\usepackage{hyperref}
\usepackage{tikz}
\usepackage{contour}

\setlength\hoffset{0in}
\setlength\voffset{0in}
\setlength\oddsidemargin{0in}
\setlength\evensidemargin{0in}
\setlength\topmargin{0in}
\setlength\headheight{0in}
\setlength\headsep{0in}
\setlength\textwidth{6.5in}
\setlength\textheight{9in}

\newtheorem{lemma}{Lemma}

\newtheorem{proposition}{Proposition}

\newtheorem{definition}{Definition}
\usepackage{thm-restate}
\usepackage{xcolor}

\newcommand{\etal}{{et~al.}}

\newcommand{\AnyOrder}{{\rm AnyOrder}}

\newcommand{\NN}{\mathbb{N}} 

\title{Ordered Yao graphs: maximum degree, \\
edge density, and clique numbers 
}
\date{}

\author{P\'eter \'Agoston\thanks{Alfréd Rényi Institute of Mathematics, Budapest, Hungary and Charles University, Prague, Czech Republic\@.
    Email: \texttt{agostonp@renyi.hu}} \and
  \!\!\!\!Adrian Dumitrescu\thanks{
   Algoresearch L.L.C., Milwaukee, WI, USA, and 
Research Institute of the University of Bucharest, Romania, and 
Alfr\'ed R\'enyi  Institute of Mathematics, Budapest, Hungary.
  E-mail: \texttt{ad.dumitrescu@algoresearch.org}} \and
  \!\!\!\!Arsenii Sagdeev\thanks{Karlsruhe Institute of Technology, Karlsruhe, Germany and Alfréd Rényi Institute of Mathematics, Budapest, Hungary\@.
    Email: \texttt{sagdeevarsenii@gmail.com}} \and
  \!\!\!\!Karamjeet Singh\thanks{Indraprastha Institute of Information Technology, Delhi, India. Email: \texttt{karamjeets@iiitd.ac.in}} \and
  \!\!\!\!Ji Zeng\thanks{University of California San Diego, La Jolla, CA, USA and Alfréd Rényi Institute of Mathematics, Budapest, Hungary. Email: \texttt{jzeng@ucsd.edu}}
}

\begin{document}

\maketitle

\begin{abstract}
For a positive integer $k$ and an ordered set of $n$ points in the plane, define its \textit{k-sector ordered Yao graphs} as follows. Divide the plane around each point into $k$ equal sectors and draw an edge from each point to its closest predecessor in each of the $k$ sectors. We analyze several natural parameters of these graphs. Our main results are as follows:

\begin{enumerate} [I]

\item Let $d_k(n)$ be the maximum integer so that for every $n$-element point set in the plane, there exists an order such that the corresponding $k$-sector ordered Yao graph has maximum degree at least $d_k(n)$. We show that $d_k(n)=n-1$ if $k=4$ or $k \ge 6$, and provide some estimates for the remaining values of $k$. Namely, we show that $d_1(n) = \Theta( \log {n} )$; $\frac{1}{2}(n-1) \le d_3(n) \le 5\left\lceil\frac{n}{6}\right\rceil-1$; $\frac{2}{3}(n-1) \le d_5(n) \le n-1$;

\item Let $e_k(n)$ be the minimum integer so that for every $n$-element point set in the plane, there exists an order such that the corresponding $k$-sector ordered Yao graph has at most $e_k(n)$ edges. Then $e_k(n)=\left\lceil\frac{k}{2}\right\rceil\cdot n-o(n)$.

\item Let $w_k$ be the minimum integer so that for every point set in the plane, there exists an order such that the corresponding $k$-sector ordered Yao graph has clique number at most $w_k$. Then $\lceil\frac{k}{2}\rceil \le w_k\le \lceil\frac{k}{2}\rceil+1$.
\end{enumerate}

All the orders mentioned above can be constructed effectively.
\end{abstract}

\section{Introduction} \label{sec:intro}

For a point set in the plane, define its \textit{Yao graphs} in the following way. Fix an integer $k \ge 1$, and divide the plane around each point into $k$ equal sectors such that one boundary ray is horizontal and directed to the right. Then draw an edge from each point to its closest neighbor in each of the $k$ sectors, see Figure~\ref{FYaok}, left. Let us call the resulting directed graph a \textit{$k$-sector Yao graph}. Observe that the outdegree of every vertex is at most $k$, and is strictly smaller if some of the corresponding sectors are empty, as in Figure~\ref{FYaok}, left.

The notion of a \textit{$k$-sector ordered Yao graph} is closely related. In this case, the vertices appear one by one, and each new vertex has precisely one outgoing edge towards its closest predecessor in each of the $k$ sectors, see Figure~\ref{FYaok}, right. To make this notion well-defined, we also need to specify how to break the ties when a point lies on a sector-bounding ray or if two points are at the same distance from a third one and within the same sector of it. To simplify the arguments, all point sets considered are \textit{in general position}, i.e., neither of the scenarios described above occurs. However, as we briefly discuss in Section~\ref{sec:conc}, all our results hold for degenerate sets as well.

\begin{figure}[hbtp]
    \centering
        \includegraphics{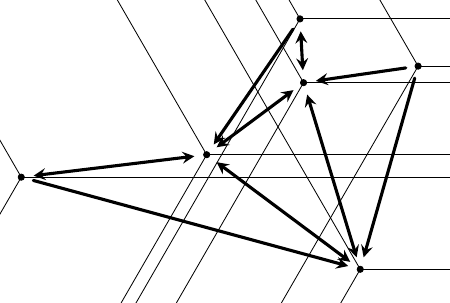}
        \includegraphics{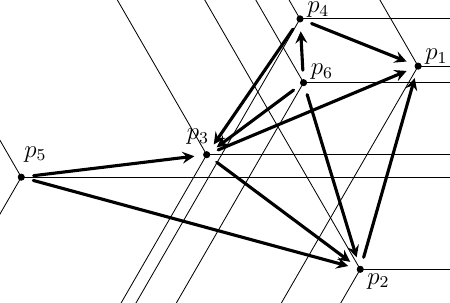}
	    \caption{3-sector unordered (left) and ordered (right) Yao graphs on the same set of six points.
 }
	\label{FYaok}
\end{figure}

Yao graphs were introduced by Yao~\cite{Yao82}, while their ordered variants are due to Bose, Gudmundsson, and Morin~\cite{BGM04} (defined for the slightly different variant called theta graphs). In modern Computational Geometry, these graphs are used to construct geometric spanners with desirable properties, such as logarithmic maximum degree and logarithmic diameter. In~\cite{BMN+04}, Bose et al. showed that the \textit{stretch factor} of $k$-sector Yao graphs is at most $1/(\cos(2\pi/k)-\sin(2\pi/k))$ for $k\ge9$, which has been improved several times, for example in \cite{BBD+15}. In \cite{BGM04}, the authors also study the stretch factor for the ordered variant. Sparse graphs with a small dilation have also been studied in~\cite{ABC+08,BGS05,BMN+04}, among others.

Note that Yao graphs in the special case $k=1$ are the well-known Nearest Neighbor Graphs with numerous applications in Computational Geometry such as computing geometric shortest paths, spanners, well-separated pairs, and approximate minimum spanning trees, see the survey~\cite{Sm00}, books~\cite{BCK+08,PS85}, or monograph~\cite{NS07}. The ordered variant of the Nearest Neighbor Graphs was introduced in~\cite{AEM92,Epp92} in the context of dynamic algorithms.

A systematic study of the basic combinatorial properties of Nearest Neighbor Graphs dates back at least to the classical paper~\cite{EPY97} by Eppstein, Paterson, and Yao in which, among others, they made the following simple observation: two edges with the same endpoint meet at an angle of at least $\pi/3$, and thus the maximum indegree is bounded from above by $6$ for planar point sets. However, it is not hard to see that for any $k>1$, the maximum indegree of a $k$-sector Yao graph can be arbitrarily large. Determining whether every point set $P$ admits an order such that the maximum indegree of the corresponding ordered Yao graph is bounded from above by some constant $c_k$, independent of the size of $P$, 
remains an open problem; see~\cite{BGM04}. To get a better understanding of how degrees behave in these graphs, here we attempt to \textit{maximize} the maximum indegree. 
The special case $k=1$ was addressed in a separate paper~\cite{ADSSZ24+}.
\begin{definition} \label{def:d_k}
For $k,n \in \NN$, let $d_k(n)$ be the maximum indegree one can always guarantee in an $n$-vertex $k$-sector ordered Yao graph by picking a suitable order. In other words, $d_k(n)$ is the maximum integer satisfying the following property. For every $n$-point set in the plane, there exists an order such that the corresponding $k$-sector ordered Yao graph has maximum indegree at least $d_k(n)$.
\end{definition}
	

\begin{restatable}{theorem}{maxdegYao}\label{thm:max_deg_Yao}
The following bounds hold:
	\begin{enumerate} \itemsep 1pt
		\item\label{item:d_1} $d_1(n) = \Theta( \log n )$;
		\item\label{item:d_3} $\frac{1}{2}(n-1) \le d_3(n) \le 5\left\lceil\frac{n}{6}\right\rceil-1$;
		\item\label{item:d_5} $\frac{2}{3}(n-1) \le d_5(n) \le n-1$;
		\item\label{item:gen_k} $d_k(n) = n-1$ otherwise, that is, if $k=2,4$ or $k\ge 6$.
	\end{enumerate}
\end{restatable}

Next we study the number of edges of a $k$-sector ordered Yao graph. This is trivial for $k=1$. Indeed, for every $n$-point set $P$ and every order of it, all the vertices of the corresponding ordered Yao graph, but the first one, have precisely one outgoing edge, and thus the graph always contains $n-1$ edges. Assume therefore that $k\ge 2$. Our next result determines the maximum number of edges of a $k$-sector ordered Yao graph that we \textit{can always guarantee} by picking a suitable order. We also obtain a `complementary' result regarding the maximum number of edges that we \textit{sometimes cannot avoid}, regardless of the order we take.

\begin{definition} \label{def:E_k}
For $k,n \in \NN$, let $E_k(n)$ be the maximum number of edges one can always guarantee in an $n$-vertex $k$-sector ordered Yao graph by picking a suitable order. In other words, $E_k(n)$ is the maximum integer satisfying the following property. For every $n$-point set in the plane, there exists an order such that the corresponding $k$-sector ordered Yao graph contains at least $E_k(n)$ edges.
\end{definition}


\begin{restatable}{theorem}{minmaxEG}
\label{prop:minmaxEG}
For $k \neq 3$, we have $E_k(n)=2n-3$ for $n \geq 3$,
and $E_3(n)=2n-4$ for $n \geq 4$.
\end{restatable}

\begin{definition} \label{def:e_k}
For $k,n \in \NN$, let $e_k(n)$ be the maximum number of edges one sometimes cannot avoid in an $n$-vertex $k$-sector ordered Yao graph regardless of the picked order. In other words, $e_k(n)$ is the maximum integer satisfying the following property. There exists an $n$-point set in the plane such that for every order, the corresponding $k$-sector ordered Yao graph contains at least $e_k(n)$ edges. 
\end{definition}


\begin{restatable}{theorem}{maxminEG}
\label{thm:maxminEG}
	For a fixed $k\ge 2$, we have $e_k(n) = n\cdot\left\lceil\frac{k}{2}\right\rceil -o(n)$ as $n \to \infty$. Moreover, we have $n\cdot\left\lceil\frac{k}{2}\right\rceil-O\left(k^2\cdot\sqrt{n}\right) \le e_k(n) \le n\cdot\left\lceil\frac{k}{2}\right\rceil -\Omega\left(k\cdot\sqrt{n}\right)$ for a fixed $k\geq 3$.
\end{restatable}

Finally, we study the \textit{clique number} of the $k$-sector ordered Yao graph, defined as the size of its largest subset of pairwise adjacent vertices, where we omit the orientation of the edges. As before, there is nothing to study if $k=1$. Indeed, all the vertices but the first one have precisely one outgoing edge, and thus the Yao graph is triangle-free, and in fact, it is not hard to show that the Yao graph is always \textit{acyclic}. So we can assume that $k \ge 2$. The following two `complementary' results determine the maximum size of a clique that we can always achieve by taking a suitable order of the point set, and estimate the maximum size of a clique that we sometimes cannot avoid, regardless of the order we take.

\begin{definition} \label{def:W_k}
For $k,n \in \NN$, let $W_k(n)$ be the maximum clique number one can always guarantee in an $n$-vertex $k$-sector ordered Yao graph by picking a suitable order. In other words, $W_k(n)$ is the maximum integer satisfying the following property. For every $n$-point set in the plane, there exists an order such that the corresponding $k$-sector ordered Yao graph contains a clique of size $W_k(n)$.
\end{definition}


\begin{restatable}{theorem}{cliqueminmax}
\label{th:clique_minmax}
For $k \geq 2$ and $n \geq 3$ we have $W_k(n)=3$ with the only exception being $W_3(3)=2$.
\end{restatable}

\begin{definition} \label{def:w_k}
For $k,n \in \NN$, let $w_k(n)$ be the maximum clique number one sometimes cannot avoid in an $n$-vertex $k$-sector ordered Yao graph regardless of the picked order, and $w_k = \max_n w_k(n)$. In other words, $w_k$ is the maximum integer satisfying the following property. There exists a point set in the plane such that for every order, the corresponding $k$-sector ordered Yao graph contains a clique of size $w_k$.
\end{definition}


\begin{restatable}{theorem}{maxminomegaG}
\label{maxminomegaG}
For $k \geq 2$ we have $\lceil\frac{k}{2}\rceil \leq w_k \leq \lceil\frac{k}{2}\rceil+1$.
\end{restatable}

It is not hard to see that, as a function of $n$, $w_k(n)$ is monotonically non-decreasing and bounded from above by $k+1$, the maximum outdegree increased by $1$.  Hence, $w_k(n)=w_k$ for all sufficiently large $n$, and in Section~\ref{sec:conc}, we make some observations regarding smaller values of $n$.

\paragraph{Motivation and related work.}
In this paper, we define closest neighbors based on the Euclidean distance. However, there are alternative ways. For instance, one may want to minimize the distance between a point and the orthogonal projection of its neighbor on the bounding ray of the hosting sector (or, in another variant, on the bisector of the hosting sector). The resulting graphs, usually referred to in the literature as \textit{$\theta$-graphs}, were introduced by Clarkson~\cite{Cla87} and independently by Keil~\cite{Keil88}, while their ordered variants are due to Bose~\etal~\cite{BGM04}. The spanning ratio of these graphs is at most $1/(1- 2 \sin(\pi/k))$ for $k \geq 7$ and any order of the point set~\cite{Ren14}, see also~\cite{ABC+08,Dam18,DR12}.

Our interest in studying the extremal values of the basic graph parameters of ordered Yao graphs is mostly theoretical. However, these extremal questions can be also viewed as measuring the robustness of incremental cone-based geometric networks under adversarial insertion orders. This connects naturally to the literature on online Euclidean~\cite{bhore2024online2} and metric~\cite{bhore2024online} spanners, where points arrive one by one and the goal is to preserve sparsity and good routing quality despite the arrival order.


\subsection{Notation}

We assume that one of the sector-bounding rays of the $k$-sector Yao graph is a horizontal ray directed to the right and call it $\ell_0$, and the remaining rays are $\ell_1,\ell_2,...,\ell_{k-1}$ in a counterclockwise order from $\ell_0$. Denote the sectors between these rays by $s_0,s_1,...,s_{k-1}$ in  counterclockwise order. For instance, $s_0$ is the sector bounded by $\ell_0$ and $\ell_1$, and so forth. Let us call the antipodal rays $-\ell_0, \dots, -\ell_{k-1}$ the \textit{dual rays}. Drawn from a common origin, they separate the plane into $k$ \emph{dual sectors} labeled by $-s_0,...,-s_{k-1}$. When centered around a point $p$, these notions will be denoted by $\ell_i(p)$, $s_i(p)$, $-\ell_i(p)$ and $-s_i(p)$, respectively. It is easy to check that $p\in s_i(q)$ if and only if $q\in-s_i(p)$, see Figure~\ref{F1}.

The notation $\AnyOrder(Q)$ stands for an arbitrary order of the point set $Q$.

\begin{figure}[hbtp]
\centering
\includegraphics{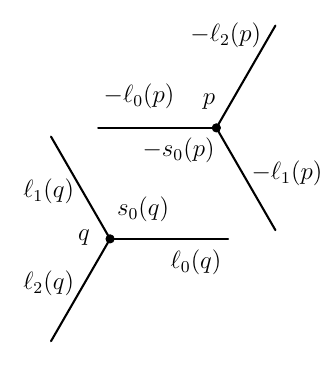}
\caption{$p\in s_0\left(q\right)$ if and only if $q \in-s_0\left(p\right)$.}
\label{F1}
\end{figure}

\section{Maximum degree} \label{sec:max_deg}

In this section we prove Theorem~\ref{thm:max_deg_Yao}.
Let us briefly outline the strategy of the proof. First, note that the equality $d_1(n) = \Theta(\log n)$ is immediate from Theorem 1 and Theorem 2 in \cite{ADSSZ24+}. Indeed, the authors in \cite{ADSSZ24+} considered ordered Nearest Neighbor graphs, which coincide with $k$-sector ordered Yao graph for $k=1$. The authors proved that any set of $n$ points on a line admits an ordering such that the corresponding ordered Nearest Neighbor graph has maximum indegree at least $\lceil\log{n}\rceil$. Here and in the rest of this paper, $\log{n}$ stands for the binary logarithm. The bound was shown to be tight by constructing an explicit set of $n$ points for which the maximum indegree cannot exceed $\lceil\log{n}\rceil$ irrespective of the ordering of the points. 
The result was extended to $\mathbb{R}^d$ and it was shown that any $n$-point set in $\mathbb{R}^d$ admits an ordering ensuring the maximum indegree of the corresponding ordered  Nearest Neighbor graph at least $\log{n}/(4d)$.
So we subsequently assume that $k>1$. In Subsection~\ref{subsec:orth}, we present a simple order yielding that $d_k(n) = n-1$ for all even $k$, and that $d_k(n) \ge \frac{1}{2}(n-1)$ for all odd $k\ge 3$. In Subsection~\ref{subsec:rad}, we give another simple order that yields $d_k(n) = n-1$ for all $k \ge 6$. 
A combination of these two orders implies that $d_5(n) \ge \frac{2}{3}(n-1)$. 
Finally, in Subsection~\ref{subsec:d3}, we prove that $d_3(n) \le 5\left\lceil\frac{n}{6}\right\rceil-1$ via an explicit construction. Together these results complete the proof of Theorem~\ref{thm:max_deg_Yao}.

\subsection{Orthogonal enumeration} \label{subsec:orth}
    
\begin{lemma} \label{lemma:orthgonal}
 Let $Q$ be a point set contained in $t=\lfloor\frac{k}{2}\rfloor$ cyclically consecutive dual sectors of a point $p$. Then there exists an ordering such that the corresponding $k$-sector Yao graph contains all the directed edges $q \to p$ for $q\in Q$.
\end{lemma}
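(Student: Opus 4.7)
The plan is to order $Q\cup\{p\}$ using a linear projection onto the axis of symmetry of the angular region containing $Q$, and then verify that this order prevents any other point of $Q$ from ``shielding'' $p$ in the relevant sector of each $q$.

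More concretely, I would place $p$ at the origin and, by a global rotation, take the $t$ consecutive dual sectors containing $Q$ to be $-s_0,-s_1,\ldots,-s_{t-1}$. Let $\vec{u}$ denote the unit vector bisecting this dual-sector region and pointing from $p$ into it. The opposite union $s_0\cup\cdots\cup s_{t-1}$ is then an angular cone of total width $2\pi t/k\le\pi$ with bisector direction $-\vec{u}$. I propose the ordering: $p$ first, followed by the points of $Q$ in \emph{decreasing} order of $q\cdot\vec{u}$ (ties broken arbitrarily).

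The crucial verification is the following claim: for each $q\in Q$ with $q\in -s_i$ (so that $p\in s_i(q)$), no predecessor $q'\in Q$ of $q$ lies in $s_i(q)$. Granting this, $p$ is the only predecessor of $q$ in that sector, hence automatically the closest, producing the edge $q\to p$. To prove the claim, suppose for contradiction such a $q'$ exists. Then $q'-q\in s_i\subseteq s_0\cup\cdots\cup s_{t-1}$, a cone of half-width $\pi t/k\le\pi/2$ around $-\vec{u}$. The general position hypothesis forbids $q'$ from lying on any sector-bounding ray of $q$, so $q'-q$ lies in the open interior of $s_i$; hence its angle from $-\vec{u}$ is strictly less than $\pi t/k$. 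This gives $(q'-q)\cdot(-\vec{u})>0$, equivalently $q'\cdot\vec{u}<q\cdot\vec{u}$, contradicting $q'\cdot\vec{u}>q\cdot\vec{u}$ coming from the ordering.

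The main delicate point to watch is the borderline case $t=k/2$ (even $k$), where $Q$ occupies a closed half-plane from $p$ and the extremal rays of $s_0\cup\cdots\cup s_{t-1}$ are perpendicular to $\vec{u}$; there the strict inequality above could in principle collapse to equality, and general position is precisely what prevents $q'-q$ from being parallel to those extremal rays and thus preserves the strictness on which the whole argument rests.
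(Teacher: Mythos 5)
Your proof is correct and follows essentially the same strategy as the paper's: after normalizing which dual sectors contain $Q$, you sweep the points of $Q$ by a linear projection so that each $q$ has no earlier point of $Q$ in the sector containing $p$, making $p$ the unique (hence nearest) predecessor there. The only cosmetic difference is the sweep direction --- the paper sorts by increasing $y$-coordinate, exploiting that $s_0\cup\dots\cup s_{t-1}$ lies in the closed upper half-plane, whereas you sort along the bisector of that cone; both yield the same conclusion.
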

\begin{proof}
  Due to the symmetry of this statement, it suffices to prove it only for the points from dual sectors between $-\ell_0$ and $-\ell_t$. Let $q_1,\dots,q_m$ be these points labeled in increasing order of their $y$-coordinates. 
We claim that in the $k$-sector Yao graph corresponding to the ordering
\[p,q_1,\dots,q_m, \AnyOrder(P\setminus \{p,q_1,\dots,q_m\}),\]
each $q_i$ is adjacent to $p$. Indeed, all the previous $q_j, j < i,$ lie below $q_i$ by construction, see Figure~\ref{F_orth}.
At the same time, $p$ belongs to one of the first $t$ sectors of $q_i$, and thus $p$ lies above $q_i$ since $t \le \frac{k}{2}$. So $p$ is the unique point in one of the first $t$ sectors of $q_i$, and thus  $q_i \to p$ is an edge. 
\end{proof}


\begin{proposition} \label{prop:even}
	For all $n \in \NN$, we have $d_k(n)=n-1$ if $k$ is even, and $d_k(n)\ge \frac{1}{2}(n-1)$ if $k>1$ is odd.
\end{proposition}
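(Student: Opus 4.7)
The plan is to invoke Lemma~\ref{lemma:orthgonal} with $p$ taken to be a convex hull vertex of $P$, so as to push as many of the remaining points as possible into $t = \lfloor k/2 \rfloor$ cyclically consecutive dual sectors of $p$. After an arbitrarily small rotation of $P$ (legitimate by the convention in the introduction), I may assume that $p$ is the lowest point of $P$ and that the remaining $n-1$ points all lie in the open upper half-plane bounded by the horizontal line through $p$.

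For even $k$, this open upper half-plane is exactly the union of $k/2 = t$ cyclically consecutive dual sectors of $p$ (each of angular width $2\pi/k$), so the full set $P\setminus\{p\}$ satisfies the hypothesis of Lemma~\ref{lemma:orthgonal}. The lemma then supplies an ordering in which every other point is adjacent to $p$, yielding a vertex of indegree $n-1$. Combined with the trivial upper bound $d_k(n)\le n-1$, this gives $d_k(n)=n-1$.

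For odd $k \ge 3$, the open upper half-plane is instead covered by $(k+1)/2 = t+1$ consecutive dual sectors of $p$ (the horizontal line through $p$ splits the two extremal ones, since sector-bounding rays cannot be horizontal when $k$ is odd and aligned as assumed). Let $a_0, a_1, \ldots, a_t$ denote the numbers of points in these sectors, in cyclic order, so that $\sum_{i=0}^{t} a_i = n-1$. Consider the two windows of $t$ consecutive sectors inside this $(t+1)$-block, with counts $A = a_0 + \cdots + a_{t-1}$ and $B = a_1 + \cdots + a_t$. Every $a_i$ is counted by at least one of these windows, so $A+B \ge n-1$ and hence $\max(A,B) \ge (n-1)/2$. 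Applying Lemma~\ref{lemma:orthgonal} to the points lying in whichever window is larger produces an ordering under which at least $(n-1)/2$ points are adjacent to $p$, yielding $d_k(n) \ge \frac{1}{2}(n-1)$.

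The only place requiring care is the initial rotation: it must simultaneously place the chosen convex hull vertex strictly below every other point and keep the configuration in general position with respect to the sector-bounding rays. This is a routine perturbation argument licensed by the introduction, so I do not foresee any genuine obstacle beyond it; the remainder of the proof is the elementary pigeonhole estimate above followed by a direct appeal to Lemma~\ref{lemma:orthgonal}.
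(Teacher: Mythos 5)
Your argument is correct and essentially identical to the paper's proof: both take an extremal point $p$ (you use the lowest, the paper the highest), observe that the remaining $n-1$ points occupy $t=\lfloor k/2\rfloor$ consecutive dual sectors when $k$ is even and $t+1$ when $k$ is odd, and then apply Lemma~\ref{lemma:orthgonal} directly (even case) or after the same two-window pigeonhole (odd case). The only cosmetic slip is the suggestion that a \emph{small} rotation makes a chosen hull vertex lowest — you should simply pick $p$ to be the lowest point outright, reserving the small rotation for general position — but this does not affect the argument.
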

\begin{proof}

Let $p$ be the highest point of $P$. Note that $t=\lfloor\frac{k}{2}\rfloor$ of its dual sectors that lie in
the upper half-plane are empty by construction, see Figure~\ref{F_orth}.
	
If $k$ is even, then the remaining $n-1$ points are distributed between $t$ dual sectors in the lower half-plane,
and Lemma~\ref{lemma:orthgonal} implies that all of them can be adjacent to $p$ under a suitable ordering.
Therefore, $d_k(n)=n-1$, as desired.
	
If $k$ is odd, the remaining $n-1$ points are distributed between $t+1$ dual sectors intersecting the lower
half-plane. By the pigeonhole principle,  either the first $t$ or the last $t$ contain at least half of the points. Moreover, all of them can be adjacent to $p$ under a suitable ordering according to Lemma~\ref{lemma:orthgonal}. Therefore, $d_k(n) \ge \frac{1}{2}(n-1)$, as desired.
\end{proof}

\begin{figure}[htbp]
\centering
\includegraphics{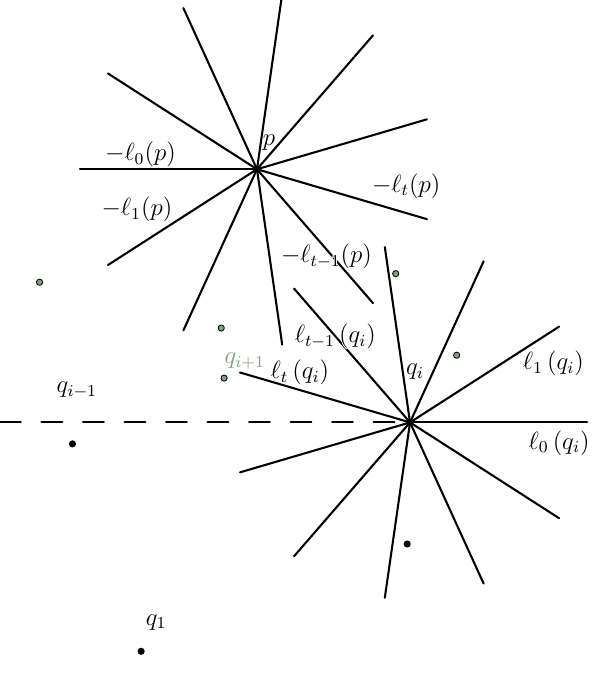}
\caption{An illustration to the proofs of Lemma~\ref{lemma:orthgonal} and Proposition~\ref{prop:even}.}
\label{F_orth}
\end{figure}

\subsection{Radial enumeration} \label{subsec:rad}
	
\begin{proposition} \label{prop:k>=6}
For all $n \in \NN$, $k \ge 6$, we have $d_k(n)=n-1$.
\end{proposition}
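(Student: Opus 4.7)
The plan is to build an ordering that makes some distinguished point $p$ receive an edge from every other vertex, giving it indegree $n-1$. Pick any $p\in P$ (it does not matter which), and list the remaining points in \emph{decreasing} order of distance from $p$. That is, use the order $p, p_2, p_3, \dots, p_n$ with $|pp_2|\ge |pp_3|\ge \dots \ge |pp_n|$. I claim that in the resulting $k$-sector ordered Yao graph, every $p_i$ with $i\ge 2$ has an outgoing edge to $p$, which forces $d_k(n)=n-1$.

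To verify the claim, fix $q = p_i$ and let $s$ be the sector of $q$ containing $p$. Since $p$ is the very first point of the ordering, $p$ is a predecessor of $q$ lying in $s$, so $q$ will send an edge along $s$ to some predecessor; the task is to show that among all predecessors of $q$ in $s$, the point $p$ is the closest. Suppose for contradiction that some predecessor $r \in s(q)$ satisfies $|qr|<|qp|$. Because $r$ appears before $q$ in the order, the construction yields $|pr|\ge |pq|$. The angle $\angle pqr$ is bounded by the angular width of a single sector, namely $2\pi/k$, and here the hypothesis $k\ge 6$ enters crucially: it gives $\angle pqr \le \pi/3$ and hence $\cos(\angle pqr)\ge 1/2$.

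Applying the law of cosines to the triangle $pqr$ yields
\[
|pr|^2 \;=\; |pq|^2+|qr|^2-2|pq|\,|qr|\cos(\angle pqr)\;\le\; |pq|^2+|qr|^2-|pq|\,|qr|\;=\; |pq|^2+|qr|\bigl(|qr|-|pq|\bigr).
\]
The last term is strictly negative by the assumption $|qr|<|qp|$, so $|pr|<|pq|$, contradicting $|pr|\ge |pq|$. Hence $p$ is indeed the closest predecessor of $q$ inside $s$, and the edge $q\to p$ is present in the ordered Yao graph. Ranging over all $i\ge 2$ gives indegree $n-1$ at $p$, which is obviously optimal, so $d_k(n)=n-1$.

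There is essentially no serious obstacle here; the proposition follows from a clean law-of-cosines calculation once the correct order (by decreasing distance from $p$) is chosen. The only delicate point is the angular bound $\cos(2\pi/k)\ge 1/2$, which is exactly why the argument requires $k\ge 6$ and breaks down at $k=5$, agreeing with the reason a different approach is needed in the odd sub-threshold regime. Ties and boundary cases are handled by the general-position assumption stated in the introduction.
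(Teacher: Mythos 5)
Your proof is correct and follows essentially the same route as the paper: the same ordering by nonincreasing distance from an arbitrary point $p$, and the same key observation that two points in one sector of $q_i$ subtend an angle at most $2\pi/k\le\pi/3$. The only cosmetic difference is that you make the triangle comparison quantitative via the law of cosines, where the paper argues qualitatively that the side opposite the small angle cannot be longest.
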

\begin{proof}
Let $P$ be any point set and $p$ be an arbitrary point of $P$. Label the remaining points $q_1,\dots,q_{n-1}$ such that their Euclidean distances to $p$ are nonincreasing. We claim that the indegree of $p$ in the $k$-sector Yao graph under the ordering $p,q_1,\dots,q_{n-1}$ equals $n-1$. Assume the contrary, namely that for some $i$, the point $q_i$ is not adjacent to $p$.
It follows that $q_i$ is adjacent to $q_j$ for some $j<i$, where $q_j$ and $p$ belong to the same sector around $q_i$. In particular, we have  $\measuredangle pq_iq_j < 2\pi/k \le \pi/3$, and thus $\angle pq_iq_j$ is not a largest angle of the triangle $q_ipq_j$. Hence, $pq_j$ is not a longest side of this triangle by the law of sines. Since we labeled the points in such an order that $|q_ip|\le|pq_j|$, we conclude that $q_iq_j$ is the unique longest side of the triangle $q_ipq_j$. However, in this case $q_i$ should be adjacent to $p$ instead of $q_j$ in the corresponding sector, a contradiction.
\end{proof}	
	
\begin{proposition} \label{prop:k=5}
	For all $n \in \NN$, we have $d_5(n)\ge \frac{2}{3}(n-1)$.
\end{proposition}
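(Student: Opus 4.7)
The plan is to take $p$ to be the topmost point of $P$ and to analyze three candidate orderings of $P$, each starting with $p$. Since $p$ is topmost, every other point lies in one of the three dual sectors $-s_0,-s_1,-s_2$; write $a_i=|P\cap(-s_i)|$ so that $a_0+a_1+a_2=n-1$. Two of the orderings will come from Lemma~\ref{lemma:orthgonal} applied to the cyclically consecutive pairs $\{-s_0,-s_1\}$ and $\{-s_1,-s_2\}$, guaranteeing indegree of $p$ at least $a_0+a_1$ and $a_1+a_2$, respectively. The third ordering will place $p$ first, then the points of $(-s_0)\cup(-s_2)$ in non-increasing order of distance from $p$, and finally the points of $-s_1$ in any order; I will claim that it yields indegree of $p$ at least $a_0+a_2$. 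Granting this, pigeonhole gives $\min(a_0,a_1,a_2)\le (n-1)/3$, so the best of the three candidates realizes indegree at least $(n-1)-(n-1)/3=\frac{2}{3}(n-1)$, as required.

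The main obstacle will be verifying the claim about the third ordering. The key tool will be the following \emph{cone lemma}: for any convex cone $C\subset\RR^2$ at the origin of opening strictly less than $\pi/2$ and any $u,v\in C$ with $|v|\ge|u|$, one has $u-v\notin C$ unless $u=v$. Its one-line proof: any two vectors in such a $C$ have non-negative inner product, so squaring $u=v+(u-v)$ yields $|u|^2\ge|v|^2+|u-v|^2$, forcing $u=v$. I will then fix $q\in(-s_0)\cup(-s_2)$ and an earlier $q_j$; by the ordering $q_j\in(-s_0)\cup(-s_2)$ and $|q_jp|\ge|qp|$, and I need to rule out $q_j$ outcompeting $p$ in the sector of $q$ containing $p$.

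Two cases will arise. When $q$ and $q_j$ share a dual sector, I apply the cone lemma to its wedge at $p$ (opening $2\pi/5=72^\circ<\pi/2$) with $u=q-p$ and $v=q_j-p$; the conclusion $q-q_j\notin C$ says that $q_j$ does not lie in the sector of $q$ whose direction is opposite to that wedge, which is exactly the sector containing $p$. When $q$ and $q_j$ lie in different members of $\{-s_0,-s_2\}$, the angle at $p$ in the triangle $pqq_j$ is at least $72^\circ$, because $-s_1$ is the only dual sector between them; setting $t=|q_jp|/|qp|\ge 1$, the law of cosines then gives $|qq_j|^2\ge (1+t^2-2t\cos 72^\circ)|qp|^2>|qp|^2$, so $p$ still wins. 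The only bookkeeping point is that at the time $q$ is processed the points of $-s_1$ are not yet added, which is immediate from the chosen ordering; this completes the plan.
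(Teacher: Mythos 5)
Your proposal is correct and follows essentially the same route as the paper: topmost point $p$, the three dual sectors with Lemma~\ref{lemma:orthgonal} covering the two adjacent pairs, a non-increasing-distance ordering for $(-s_0)\cup(-s_2)$, and the pigeonhole over the three pairwise sums. The only cosmetic difference is in the same-dual-sector case, where your inner-product cone lemma replaces the paper's observation that $s_2(q_i)\cap\bigl(-s_2(p)\bigr)$ is a parallelogram with acute angles at $q_i$ and $p$ and hence has $q_ip$ as its diameter; the two arguments are equivalent.
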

\begin{proof}
Let $P$ be any point set and $p$ be the highest point of $P$, so for $k=5$, $-s_3(p)$ and $-s_4(p)$ are empty. Let $P_0, P_1, P_2$ be sets of the remaining points in $-s_0(p)$, $-s_1(p)$ and $-s_2(p)$, respectively, and $a_0,a_1, a_2$ be their cardinalities, see Figure~\ref{F3}.

  
		\begin{figure}[!htb]
			\centering
			\includegraphics{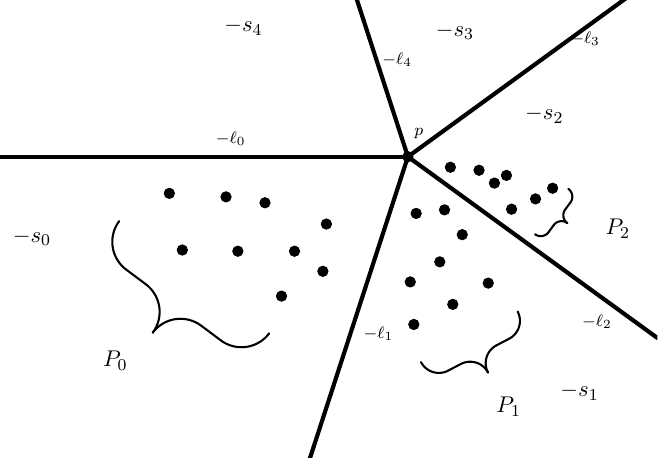}
			\caption{Five dual sectors of $p$; the last two are empty.}
			\label{F3}
		\end{figure}

	Lemma~\ref{lemma:orthgonal} implies that there exists an ordering such that each vertex of $P_0\cup P_1$ is adjacent to $p$. Besides, there exists an ordering such that each vertex of $P_1\cup P_2$ is adjacent to $p$. A similar statement for the union $P_0\cup P_2$ will complete the proof. Indeed, since $a_0+a_1+a_2=n-1$, at least one of the sums $a_0+a_1, a_1+a_2, a_2+a_0$ is at least $\frac{2}{3}(n-1)$ by the pigeonhole principle.
		
	Let $q_1,\dots,q_{m}$ be the points of $P_0\cup P_2$ labeled such that their Euclidean distances to $p$ are nonincreasing, where $m=a_2+a_0$. We claim that each of them is adjacent to $p$ in the $5$-sector Yao graph under the ordering\begin{equation*}
	    p,q_1,\dots,q_{m}, \AnyOrder(P_1).
	\end{equation*}
	
	As in the proof of Proposition~\ref{prop:k>=6}, assume the contrary, namely that for some $j<i$, $q_i$ is adjacent to $q_j$ instead of $p$. First, suppose that $q_i$ and $q_j$ belong to different dual sectors of $p$. This yields that $\measuredangle q_ipq_j > \frac{2\pi}{5} > \frac{\pi}{3}$. As earlier, we conclude that $\angle q_ipq_j$ is not a smallest angle of the triangle $q_ipq_j$ and $q_iq_j$ is not a shortest side. Hence, $|q_ip|<|q_iq_j|$ and thus $q_i$ should be adjacent to $p$ instead of $q_j$, a contradiction.
	
	Second, suppose that $q_i$ and $q_j$ belong to the same dual sectors of $p$, say, $q_i,q_j \in P_2$. Then $q_j\in-s_2(q_i)\cap s_2(p)$. This intersection is a parallelogram with angles $\frac{2\pi}{5} < \frac{\pi}{2}$ at the vertices $q_i$ and $p$. Therefore, $q_ip$ is the diameter of this parallelogram, and thus $|q_ip|>|pq_j|$, a contradiction again.
\end{proof}

\subsection{Upper bound on $d_3(n)$} \label{subsec:d3}

\begin{proposition}\label{prop:d3}
We have $d_3(n)\le5\left\lceil\frac{n}{6}\right\rceil-1$.
\end{proposition}

\begin{proof}
Since $d_3(n)$ is clearly nondecreasing as a function of $n$, assume without loss of generality that $n=6m$ for some $m\in \NN$ and construct an $n$-element point set as follows. 
Pick a very small angle, say, $\alpha =\frac{\pi}{10m}$. For $1\le i \le m$, 
 let $a_i, b_i, c_i, d_i, e_i$, and $f_i$ be points on the unit circle whose angles with
        the $x$-axis equal to $i\alpha$, $-i\alpha$,
        $\frac{2\pi}{3}+i\alpha$, $\frac{2\pi}{3}-i\alpha$,
        $\frac{4\pi}{3}+i\alpha$, and $\frac{4\pi}{3}-i\alpha$,
        respectively, see Figure~\ref{F4} (recall that $k=3$). 
    A small perturbation brings it in general position.
	
	\begin{figure}[htbp]
		\centering
        \resizebox{0.47\textwidth}{!}{\includegraphics{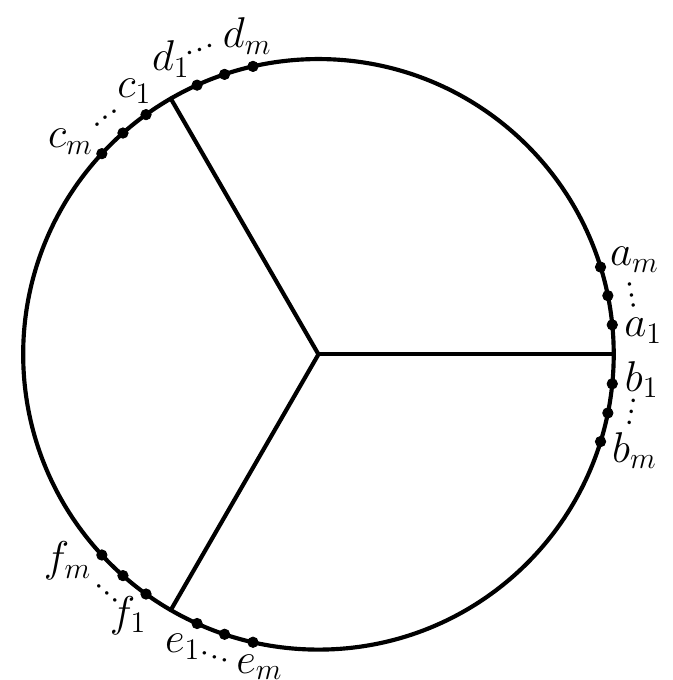}}
		\caption{Both $c_m$ and $a_i$ belong to the first sector of $f_1$, both $f_1$ and $a_i$
                  belong to the third sector of $c_m$, $c_mf_1$ is the shortest side of the triangle $a_ic_mf_1$.}
		\label{F4}
	\end{figure}
	
	To show that $d_3(n) \le5\left\lceil\frac{n}{6}\right\rceil-1$, it suffices  to show that for every ordering, the indegree of each vertex in the $3$-sector Yao graph does not exceed
        $(n-1)-m$. Due to the symmetry of this configuration, it suffices 
        to show this only for $a_i$, $1 \le i \le m$. Pick
        $1 \le j \le m$ and consider a triangle
        $a_ic_{m+1-j}f_j$. Note that both $c_{m+1-j}$ and $a_i$ belong
        to the first sector of $f_j$, and that both $f_j$ and
        $a_i$ belong to the third sector of $c_{m+1-j}$, see
        Figure~\ref{F4}. Moreover, it is not hard to see that
        $c_{m+1-j}f_j$ is the shortest side of the triangle
        $a_ic_{m+1-j}f_j$. Therefore, among $c_{m+1-j}$ and $f_j$, the
        vertex that appears later cannot be adjacent to $a_i$. Since
        this holds for all $1\le j \le m$, we conclude that the
        indegree of $a_i$ does not exceed $(n-1)-m$ under each
        ordering, as desired.
\end{proof}

\vspace{3mm}

\begin{proof}[Proof of Theorem \ref{thm:max_deg_Yao}]
Recall, as mentioned at the beginning of Section \ref{sec:max_deg} that the equality $d_1(n) = \Theta( \log n )$ follows from Theorem 1 and Theorem 2 in \cite{ADSSZ24+}.
The bounds in Case~\ref{item:d_3} of the theorem follow from Proposition \ref{prop:even} and Proposition \ref{prop:d3}. By Proposition \ref{prop:k=5}, $\frac{2}{3}(n-1)\le d_5(n)$. Also, $d_5(n)\le n-1$ holds trivially. Finally, Case~\ref{item:gen_k} of the theorem follows from Proposition \ref{prop:even} and Proposition \ref{prop:k>=6}.  
\end{proof}

\section{Clique numbers}

In this section, we will examine the clique number of ordered Yao graphs. We use nothing about the distances among the points, thus, all of our arguments work for other variants, like ordered theta graphs.

\subsection{Maximizing the largest clique}

For the proof of Theorem \ref{th:clique_minmax}, we first give a construction of a set $P$ of $n$ points ensuring that, regardless of the ordering of the points in $P$, the clique number of the $k$-sector ordered Yao graph is at most $3$. We also give a specific 3-point set such that the corresponding $3$-sector ordered Yao graph is triangle-free regardless of the ordering. Then we show that any arbitrary set of $n$ points admits an ordering for which the $k$-sector ordered Yao graph contains a triangle, except when $k=n=3$.

\begin{proof}[Proof of Theorem~\ref{th:clique_minmax}]

To prove the upper bound, consider a set $P$ of $n$ points on a generic line (not parallel to any of the $\ell_i$). It is easy to see that each point contains all the others in (at most) two sectors, and thus its outdegree is at most two regardless of the ordering. Hence, the clique number of the $k$-sector ordered Yao graph is at most $3$.

For $k=n=3$, take a triangle centered at the origin and whose vertices are on $\ell_0$, $\ell_1$ and 
$\ell_2$, see Figure~\ref{fig:k=n=3}. It is easy to see that for every order, the corresponding $3$-sector ordered Yao graph contains only two edges, and thus it is triangle-free.

\begin{figure}[hbtp]
    \centering
    \includegraphics{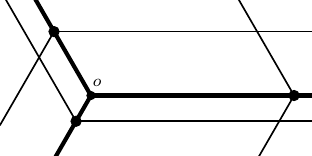}
	\caption{A set with a triangle-free $3$-sector ordered Yao graph.}
	\label{fig:k=n=3}
\end{figure}

Now we prove the lower bounds. Our goal is to find three points $p_1, p_2, p_3$ in an arbitrary point set $P$ such that one of them, say $p_3$, contains the other two in different sectors. For such a triple, the $k$-sector ordered Yao graph corresponding to the order $$p_1, p_2, p_3, \AnyOrder(P\setminus\{p_1,p_2,p_3\})$$ contains a triangle $p_1p_2p_3$. 

First, consider the case when $k$ is even. Among $3$ arbitrary points, there is one, which is neither topmost, nor bottommost and thus does not contain the other two in the same sector, and we are done.

Similarly, if $k\ge6$, then an arbitrary triangle from $P$ has one angle at least $\frac{\pi}{3} \ge \frac{2\pi}{k}$, and thus its vertex contains the other two in different sectors.

If $k=5$, we take three arbitrary points from $P$ and translate the angles of their triangle $\Delta$ to a common origin, which divides the plane into $3$ `cones' and $3$ `complementary cones', see Figure~\ref{fig:k=5}. 

\begin{figure}[hbtp]
	\centering
	\begin{minipage}{.49\textwidth}
	\includegraphics{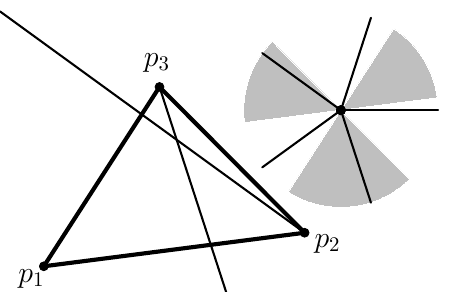}
	\end{minipage}
	\begin{minipage}{.49\textwidth}
	\includegraphics{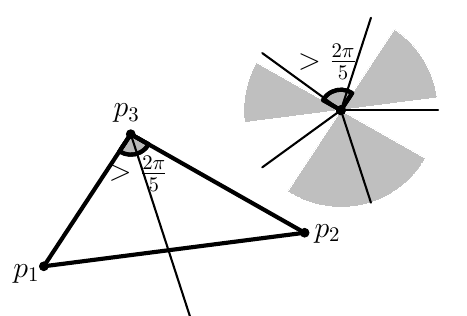}
	\end{minipage}
	\caption{Illustration for the case $k=5$ of Theorem \ref{th:clique_minmax}. On the left, we can see an occurrence of the first subcase when there are cones (colored in gray) which contain at least one of the $\ell_i$. On the right, we can see an occurrence of the second subcase when there is a complementary cone containing two of the $\ell_i$, which also results in the cone on the opposing side containing an $\ell_i$, or in other words, one of the $p_i$ seeing the other two of the $p_i$ in different sectors.}
	\label{fig:k=5}
\end{figure}

By the pigeonhole principle, either one of the cones contains at least one of the $\ell_i$'s, or at least one of the dual cones contains two. In the first case, we are done, because one of the three points has the other two in different sectors. In the second case, we are also done, since one of the angles of $\Delta$ is larger than $\frac{2\pi}{5}$, and thus it must contain an $\ell_i$.

Finally, suppose that $k=3$, $n\ge4$, and each point contains the others in one sector. By the pigeonhole principle, some two of the points contain all the others in the same sector, say, in $s_0$. However, for all $p_1,p_2 \in P$, if $p_2\in s_0\left(p_1\right)$, then $p_2$ is higher than $p_1$, and thus $p_1\notin s_0\left(p_2\right)$. This contradiction completes the proof.
\end{proof}

\subsection{Minimizing the largest clique}

\label{maxminomegaGupper}
\begin{proof}[Proof of Theorem~\ref{maxminomegaG}]
The upper bound is trivial: since the last $\left\lfloor\frac{k}{2}\right\rfloor$ of the sectors corresponding to each vertex belong to the lower half-plane, they do not contain the preceding points in the top-to-bottom ordering. Hence, the outdegree of each vertex is at most $\left\lceil\frac{k}{2}\right\rceil$ in the $k$-sector ordered Yao graph, and so the size of any clique is at most $\left\lceil\frac{k}{2}\right\rceil+1$.

For the lower bound, we construct a set $P$ of $\left\lceil\frac{k}{2}\right\rceil$ points in the plane such that every point contains all the others in pairwise distinct sectors. It is clear that no matter how we order these points, the corresponding $k$-sector ordered Yao graph would be a clique.

Our construction of such a set depends on the residue of $k$ modulo 4. For $k=2m$, we take the vertex set a regular $m$-gon centered at the origin, with one vertex initially on the positive horizontal axis, and then slightly rotate it about the origin, see Figure~\ref{FYaok2}~(right). In case $k=2m+1$ and $m$ is odd, we take the $0^{\mbox{th}}$, $4^{\mbox{th}}$, $8^{\mbox{th}}$,...,$(4m-4)^{\mbox{th}}$, and $(4m-1)^{\mbox{th}}$ vertices of a regular $(4m+2)$-gon labeled in a counterclockwise order such that the $0^{\mbox{th}}$ vertex is on the positive horizontal axis, and then slightly rotate this set, see Figure~\ref{FYaok2}~(middle). Finally, in case $k=2m+1$ and $m$ is even, we take the $1^{\mbox{st}}$, $5^{\mbox{th}}$, $9^{\mbox{th}}$,...,$(4m-3)^{\mbox{th}}$, and $4m^{\mbox{th}}$ vertices of a regular $(4m+2)$-gon labeled in a counterclockwise order such that the $0^{\mbox{th}}$ vertex is on the positive horizontal axis, and then slightly rotate this set, see Figure~\ref{FYaok2}~(left).

To complete the proof, it remains only to check that all three constructions described above satisfy the desired property, namely that every point contains all the others in pairwise distinct sectors. This task is straightforward yet tedious. To formally verify this, we utilize the standard bijection between the plane and the set of complex numbers $\mathbb{C}$.

\begin{figure}[hbtp]
    \centering
	\begin{minipage}{.32\textwidth}
        \resizebox{0.8\textwidth}{!}{\includegraphics{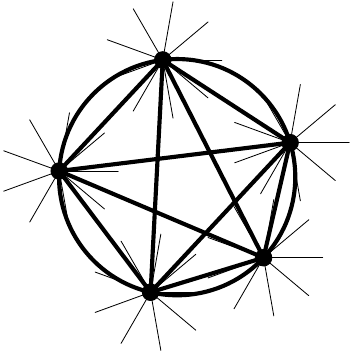}}
        \end{minipage}
	\begin{minipage}{.32\textwidth}
        \resizebox{0.8\textwidth}{!}{\includegraphics{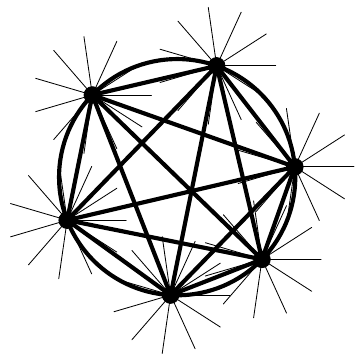}}
        \end{minipage}
        	\begin{minipage}{.32\textwidth}
        \resizebox{0.8\textwidth}{!}{\includegraphics{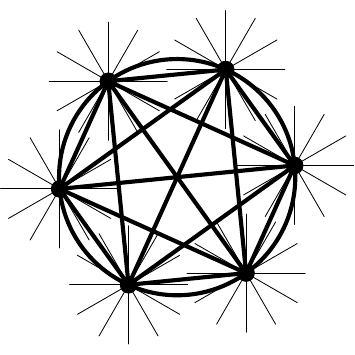}}
        \end{minipage}
	\caption{Examples for $k=9,11,12$.
 }
	\label{FYaok2}
\end{figure}

Let $z$ be the $2k$-th root of $1$ with the smallest positive argument, that is, $z=\cos\frac{\pi}{k} + i \cdot \sin\frac{\pi}{k}$. The directions of the rays $\ell_0,\ell_1,...,\ell_{k-1}$ are $z^0,z^2,...,z^{2k-2}$, respectively. In other words, even powers of $z$ correspond to sector boundaries. Observe that if $j_1-k<j_2<j_1+k$, i.e., if the angle between the directions $z^{j_2}$ and $z^{j_1}$ is strictly between $-\pi$ and $\pi$, then the direction of their sum $z^{j_2}+z^{j_1}$ is the bisector $z^{(j_1+j_2)/2}$. Hence, if $j_1<j_2<j_1+2k$, i.e., if the angle between the directions $z^{j_2}$ and $z^{j_1}$ is strictly between $0$ and $2\pi$, then the direction of the vector connecting $z^{j_1}$ to $z^{j_2}$, i.e. of the vector $z^{j_2}-z^{j_1} = z^{j_2}+z^{j_1+k}$ is $z^{(j_1+j_2+k)/2}$. Observe that the condition $j_1<j_2<j_1+2k$ here is crucial: if we switch the roles of $j_1$ and $j_2$ or consider a different representation $z^{j_2+2k}$ of the same point $z^{j_2}$, then the formula is only correct up to sign. In the rest of this proof, we will apply this formula several times, its simplicity somewhat justifies the usage of the complex plane.

In case $k=2m$, put $p_j = z^{4j}$ for $0 \le j < m$. Note that if $j_1<j_2$, then the direction of the vector $p_{j_2}-p_{j_1}$ is $z^{(4j_1+4j_2+2m)/2} = z^{2j_1+2j_2+m}$, thus the direction of the opposite vector $p_{j_1}-p_{j_2}$ is $z^{2j_1+2(j_2+m)+m}$. Now one can check that for each fixed $j_1$, the powers of the directions of the vectors $p_{j_2}-p_{j_1}$, $j_2\neq j_1$, form the set $I_{j_1} = \{2j_1+2j_2+m: j_1<j_2<j_1+m\}$.

If $m$ is even, then the elements of each $I_{j_1}$ are consecutive even integers. Hence, for each $j_1$, the directions of the vectors $p_{j_2}-p_{j_1}$, $j_2 \neq j_1$, coincide with $m-1$ cyclically consecutive sector boundaries. Rotating this configuration by a sufficiently small angle about the origin moves these directions into the interiors of $m-1$ cyclically consecutive sectors. That is $P=\{p_j\cdot z^\varphi: 0 \le j < m\}$ is the desired configuration for a sufficiently small $\varphi$, see Figure~\ref{FYaok2}~(right). 

If $m$ is odd, then the elements of each $I_{j_1}$ are consecutive odd integers. Hence, each $p_{j_1}$ contains all the other points in the interiors of $m-1$ cyclically consecutive sectors, exactly on their bisectors. In this case, a small rotation is not even needed, but it also cannot break the desired property. This completes the proof of Theorem~\ref{maxminomegaG} if $k$ is even.

In case $k=2m+1$ and $m$ is odd, we put $p_j = z^{4j}$ for $0 \le j < m$ and $p_m = z^{4m-1}$. We claim that after a sufficiently small rotation about the origin, every point contains all the others in pairwise distinct sectors. We begin by considering the special point $p_m$. For each $0\le j < m$, the direction of the vector $p_{m}-p_j$ is $z^{((4m-1)+4j+(2m+1))/2}=z^{2j+3m}$ and thus the direction of the opposite vector $p_{j}-p_m$ is $z^{(2j+3m)-(2m+1)}=z^{2j+m-1}$. The powers $\{2j+m-1: 0 \le j < m\}$ are consecutive even integers, and thus the directions of the vectors $p_{j}-p_m$, $0 \le j < m$, coincide with $m$ cyclically consecutive sector boundaries. Hence, rotating this configuration by a sufficiently small angle about the origin moves these directions into the interiors of $m$ cyclically consecutive sectors, as claimed, see Figure~\ref{FYaok2}~(middle).

Now we prove the claim for each fixed $0 \le j_1 < m$. For $j_1<j_2<m$, the direction of the vector $p_{j_2}-p_{j_1}$ is $z^{2j_1+2j_2+m+1/2}$. As we have already seen, the direction of the vector $p_m-p_{j_1}$ is $z^{2j_1+3m}$. Finally, for $0 \le j_2<j_1$, the direction of the vector $p_{j_1}-p_{j_2}$ is $z^{2j_1+2j_2+m+1/2}$ and thus the direction of the opposite vector $p_{j_2}-p_{j_1}$ is $z^{(2j_1+2j_2+m+1/2)+(2m+1)}=z^{2j_1+2j_2+3m+3/2}$. For $j_2 = j_1+1,j_1+2,...,m-1, m,0,1,..., j_1-1$, the powers of these directions are equal to $2j_1+2(j_1+1)+m+1/2,\ 2j_1+2(j_1+2)+m+1/2,..., 2j_1+2(m-1)+m+1/2,$  $2j_1+3m,$ $2j_1+2\cdot0+3m+3/2, \ 2j_1+2\cdot1+3m+3/2,..., 2j_1+2(j_1-1)+3m+3/2$. Observe that an open interval between any two consecutive numbers in this sequence is of length at most 2 and contains an even integer. Indeed, since all but two of the lengths are precisely equal to 2, it is sufficient to note that the open interval between $2j_1+2(m-1)+m+1/2$ and $2j_1+3m$ of length $3/2$ contains an even integer $2j_1+3m-1$, and that the open interval between $2j_1+3m$ and $2j_1+2\cdot0+3m+3/2$ of length $3/2$ contains an even integer $2j_1+3m+1$. Therefore, all the points $p_{j_2}$, $j_2\neq j_1$, belong to the interiors of pairwise distinct sectors of $p_{j_1}$, as claimed. Note that a rotation about the origin by a sufficiently small angle does not affect this property, see Figure~\ref{FYaok2}~(middle). This completes the proof of Theorem~\ref{maxminomegaG} if $k=2m+1$ and $m$ is odd.
        
The case $k=2m+1$ and $m$ is even, is very similar to the previous one with the only difference being the multiplication by $z$, which will increase the powers of all the directions by 1 to `compensate' the change of the parity of $m$. In other words, we put $p_j = z^{4j+1}$ for $0 \le j < m$ and $p_m = z^{4m}$. As before, we claim that after a sufficiently small rotation about the origin, every point contains all the others in pairwise distinct sectors. One can check as earlier that for each $0\le j < m$, the direction of the vector $p_{j}-p_m$ is $z^{2j+m}$, and their powers are consecutive even integers. Thus the directions of the vectors $p_{j}-p_m$, $0 \le j < m$, coincide with $m$ cyclically consecutive sector boundaries and a sufficiently small rotation moves these directions into the interiors of $m$ cyclically consecutive sectors, as claimed, see Figure~\ref{FYaok2}~(left). Similarly, for each $0 \le j_1 < m$, one can compute the directions of the vectors $p_{j_2}-p_{j_1}$ for $j_2 = j_1+1,j_1+2,...,m-1, m,0,1,..., j_1-1$ and observe that the sequence of their powers satisfies the following property: an open interval between any two consecutive numbers in this sequence is of length at most 2 and contains an even integer. Therefore, all the points $p_{j_2}$, $j_2\neq j_1$, belong to the interiors of pairwise distinct sectors of $p_{j_1}$, and a sufficiently small rotation does not affect this property, as claimed, see Figure~\ref{FYaok2}~(left). This completes the proof of Theorem~\ref{maxminomegaG} if $k=2m+1$ and $m$ is even, and hence in all cases.
\end{proof}

\section{Edge density} \label{sec:edges}

In any ordered Yao graph, the number of edges equals the sum of the outdegrees of the points, and the outdegree of a point is the number of non-empty sectors at the moment of its addition. Thus, we focus on this quantity in this section as the distances between points are irrelevant in this aspect. This also means that our results for the edge number of ordered Yao graphs also hold for, say, ordered theta graphs.

\subsection{Maximizing the number of edges}
\begin{proof}[Proof of Theorem~\ref{prop:minmaxEG}]
In the proof of Theorem~\ref{th:clique_minmax}, we established the following statement: for $k\neq 3$ and $n \geq 3$, every $n$-point set contains a point $p_n$ that does not contain all other points in a single sector. Moreover, the same statement also holds when $k = 3$ but $n \geq 4$. Therefore, when $k \neq 3$, we can recursively choose points $p_n,p_{n-1},\dots,p_4,p_3$ (and $p_2,p_1$ are arbitrary) such that $p_{n-i}$ does not contain all points $p_1,\dots,p_{n-i-1}$ in a single sector for all $i \leq n-3$. Thus, if we use the ordering $p_1,p_2,\dots,p_n$, the corresponding $k$-sector ordered Yao graph contains at least $0+1+2\cdot(n-2) = 2n-3$ edges, as desired. The case $k=3$ is similar, with the only difference that we recursively choose points until only $3$ points left (namely $p_1,p_2,p_3$). As a result, we construct an ordering such that the corresponding $3$-sector ordered Yao graph contains at least $0+1+1+2\cdot(n-3) = 2n-4$ edges, as desired. This proves the lower bound. As for the upper bound, observe that for $n$ points on a line (not parallel to the $\ell_i$, so the construction is in general position), the two endpoints both have one sector which contains all the other points, while the rest has two, thus no ordering can generate more than $2n-3$ edges, regardless of the value of $k$. In case $k=3$, Figure \ref{fig:2n-4edgesk3} provides a better construction: whatever ordering we take, $A$, $B$ and $C$ have outdegree at most $1$, while all other points have outdegree at most $2$. Combined with the fact that the first point always has outdegree $0$, this gives an upper bound of $2n-4$ on the number of edges.
\end{proof}
\begin{figure}[!htb]
		\centering
		\includegraphics{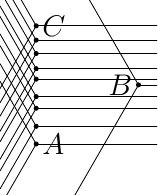}
		\caption{The points of the construction along with their incident rays.}
		\label{fig:2n-4edgesk3}
\end{figure}

\subsection{Minimizing the number of edges}

We prove Theorem~\ref{thm:maxminEG} in this subsection. The upper bound $e_k(n) \le (n-1)\cdot\left\lceil\frac{k}{2}\right\rceil$ is trivial. Indeed, the last $\left\lfloor\frac{k}{2}\right\rfloor$ of the sectors of each vertex belong to the lower half-plane, so they do not contain the preceding points in the top-to-bottom ordering. Hence, all vertices have outdegree at most $\left\lceil\frac{k}{2}\right\rceil$ in the corresponding $k$-sector ordered Yao graph. For $k=2$ this upper bound is trivially sharp. Thus, we assume $k\ge3$ in the following subsections.

\subsubsection{Upper bound: $e_k(n) \le n\cdot\left\lceil\frac{k}{2}\right\rceil -\left\lceil\sqrt{n}\right\rceil\cdot\left\lfloor\frac{k+1}{4}\right\rfloor$} \label{edges_upper}
	\begin{figure}[!htb]
		\centering
		\begin{minipage}{.69\textwidth}
		\begin{center}
		\centering
		\includegraphics[scale=0.8]{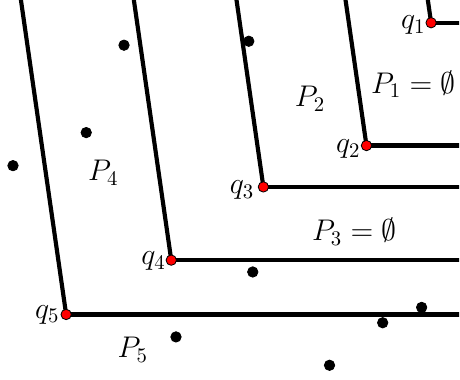}
		\end{center}
		\end{minipage}
		\begin{minipage}{.29\textwidth}
		\begin{center}
		\centering
		\includegraphics[scale=0.8]{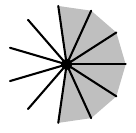}
		
		\includegraphics[scale=0.8]{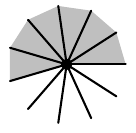}
		
		\includegraphics[scale=0.8]{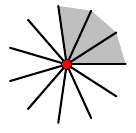}
		\end{center}
		\end{minipage}
		\caption{Left: the points in $Q$ are red; the rays $\ell_0$ and  
        $\ell_{\left\lceil\frac{k}{4}\right\rceil}$ at each $q_i$ are also drawn; and they divide $P\!\setminus\! Q$ into point sets $P_i$. Right: the sectors a newly added point ``sees''; the sectors in grey are possibly non-empty, and the sectors in white must be empty; the third case represents when the added point is from $Q$.}
		\label{fig:ErdosSzekeres}
	\end{figure}

Let $P$ be an arbitrarily given point set in general position, we need to find an ordering such that the corresponding ordered Yao graph has fewer than the claimed number of edges. For any $p \in P$, let $x(p)$ and $y(p)$ be the Cartesian coordinates of $p$. Consider the intersection of the $x$-axis with the line passing through $p$ along the direction of  $\ell_{\left\lceil\frac{k}{4}\right\rceil}$, and denote the $x$-coordinate of this intersection as $x'(p)$. Note that the general position hypothesis guarantees that $y(p) \neq y(q)$ and $x'(p) \neq x'(q)$ for $p\neq q$, although it is possible that $x(p) = x(q)$ unless $k$ is a multiple of $4$.

By the Erdős--Szekeres theorem, we can find a subset $Q=\left\lbrace q_1,...,q_m\right\rbrace$ for some $m\ge\left\lceil\sqrt{n}\right\rceil$ with $x'(q_i) > x'(q_{i+1})$ and satisfying either $y(q_i) > y(q_{i+1})$ or $y(q_i) < y(q_{i+1})$ for all $i$. Let us assume the former case $y(q_i) > y(q_{i+1})$ is true, the other case can be similarly handled as we remark at the end of the proof. Let $Q$ be maximal among all such subsets. We denote $P_i^+= \{ p \in P : x'(p) > x'(q_i),\ y(p)>y(q_i)\}$ and $P_{m+1}^+ = P\setminus Q$. Then, we define $P_i = P_{i+1}^+ \setminus P_i^+$. Notice that the maximality of $Q$ implies $P_1^+ = \emptyset$, so $P_1,\dots, P_m$ form a partition of $P \setminus Q$.

Now, we determine an ordering on the points of $P_i$ for a fixed $i$: First, take the points with a larger $y$ value than $y(q_i)$, ordered by their $x'$-values in decreasing order; then take the remaining points, ordered by their $y$ values in decreasing order. Let us denote this ordering of $P_i$ as 
$\text{Order}(P_i)$. Thus, we define the ordering of $P$ as follows:\begin{equation*}
    q_1,\text{Order}(P_1),q_2,\text{Order}(P_2),\dots, q_m,\text{Order}(P_m).
\end{equation*}

Next, we bound the number of edges in this ordering. Indeed, the points preceding $q_i$ have both a larger $x'$-value and a larger $y$-value coordinate than $q_i$, which means $q_i$ has outdegree at most $\left\lceil\frac{k}{4}\right\rceil$. For each point in $P_i$, it only sees preceding points from either right or above (see Figure~\ref{fig:ErdosSzekeres} right), so it has outdegree at most $\left\lceil\frac{k}{2}\right\rceil$. Therefore, we can conclude our upper bound using $m\ge\left\lceil\sqrt{n}\right\rceil$ and $\left\lfloor\frac{k+1}{4}\right\rfloor=\left\lceil\frac{k}{2}\right\rceil-\left\lceil\frac{k}{4}\right\rceil$.

Finally, if instead we had $y(q_i) < y(q_{i+1})$ for points in $Q$, we consider the consecutive sectors $s_{\left\lceil\frac{k}{4}\right\rceil}, s_{\left\lceil\frac{k}{4}\right\rceil+1},\dots, s_{2\cdot \left\lceil\frac{k}{4}\right\rceil-1}$ of $q_1,q_2,\dots,q_{m}$. They are nested just like the consecutive sectors $s_1,s_2,\dots,s_{\left\lceil\frac{k}{4}\right\rceil}$ of $q_1,q_2,\dots,q_m$ in the previous case (where $y(q_i) > y(q_{i+1})$). Hence, we can choose a similar ordering as before after a suitable rotation of the configuration.

\subsubsection{Lower bound: $e_k(n) \geq n \cdot \left\lceil\frac{k}{2}\right\rceil-O\left(\sqrt{n}\cdot k^2\right)$} \label{edges_lower}

First, we shall construct a point set $P$ of $n$ elements. Take a set of size $n$ that contains the $\left\lfloor\sqrt{n}\right\rfloor\times\left\lfloor\sqrt{n}\right\rfloor$ grid and is contained in a $\left\lceil\sqrt{n}\right\rceil\times\left\lceil\sqrt{n}\right\rceil$ grid. Apply a scaling transformation to make the gap of the grid slightly larger than $1$, but the diameter of the set remains at most $\left\lceil\sqrt{2}\cdot\left\lceil\sqrt{n}\right\rceil\right\rceil$. Make the point set in general position after small perturbations of the points, while ensuring the minimum distance among the points remains over $1$ and the diameter of the point set remains under $\left\lceil\sqrt{2}\cdot\left\lceil\sqrt{n}\right\rceil\right\rceil$. The resulting set is our $P$.

We shall show that the number of edges is always at least $n \cdot \left\lceil\frac{k}{2}\right\rceil-O\left(\sqrt{n}\cdot k^2\right)$ in the ordered Yao graph associated with $P$, regardless of the ordering. We fix an arbitrary ordering of $P$ and define a multiset $S$ containing vertices with outdegree less than $\left\lceil\frac{k}{2}\right\rceil$ in the ordered Yao graph constructed according to this ordering, where the multiplicity of each vertex is the difference between its outdegree and $\left\lceil\frac{k}{2}\right\rceil$. It suffices to prove $\lvert S\rvert\le O\left(\sqrt{n}\cdot k^2\right)$ where $|S|$ is counted with multiplicity.

Now, let $S_i$ be the set of points $p$ for which both sectors $s_i(p)$ and $s_{i+\lfloor k/2\rfloor}(p)$ are empty when $p$ is added in the ordered Yao graph. Here the addition of the indices is computed mod $k$. Observe that if a sector $s_j(p)$ is non-empty when $p$ is added, it would only disqualify $p$ to be in $S_j$ and $S_{j - \lfloor k/2\rfloor}$. This means the number of appearances of $p$ in $S_0,S_1,\dots,S_{k-1}$ is at least $k - 2x_p$ where $x_p$ is the outdegree of $p$. As a consequence, we can estimate \begin{equation*}
    \sum\limits_{i=0}^{k-1}{|S_i|} \geq \sum_{p \in \overline{S}} k - 2x_p = \sum_{p \in \overline{S}} 2\cdot \left(\frac{k}{2} - x_p\right) \geq \sum_{p \in \overline{S}} \frac{k}{2} - x_p + \frac{1}{2} \geq \sum_{p \in \overline{S}} \left\lceil\frac{k}{2}\right\rceil - x_p =|S|,
\end{equation*} where $\overline{S}$ is the set of elements in $S$ without multiplicity; recall that $x_p< \left\lceil\frac{k}{2}\right\rceil$ for each $p \in \overline{S}$, and thus $\frac{k}{2} - x_p \ge \frac{1}{2}$. Hence, it suffices for us to prove $|S_i|\le O\left(\sqrt{n}\cdot k\right)$ for all $i$.

\begin{figure}[!htb]
		\centering
		\includegraphics{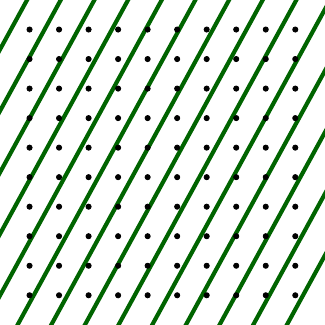}
		\caption{The points of $P$ along with the $\sigma_{i,j}$'s (their borders being denoted by green) for some fixed $i$.}
		\label{fig:Gridwithstrips}
\end{figure}
Next, we shall define strips $\sigma_{i,j}$ for $i = 0,\dots, k-1$ and $j = 1, \dots, t_i$ for some $t_i \leq O(\sqrt{n})$. We shall prove later that $S_i\cap\sigma_{i,j}=O(k)$ for any $i,j$, and that would conclude the proof. Consider the line $\ell$ bisecting the two rays $\ell_i$ and $\ell_{i + \lfloor k/2\rfloor+1}$ at an arbitrary point. Let $\sigma_i$ denote the minimal strip that contains $P$ and whose direction is perpendicular to $\ell$. Notice that the diameter of $P$ is at most $\left\lceil\sqrt{2}\cdot\left\lceil\sqrt{n}\right\rceil\right\rceil$, we can cover $\sigma_i$ by at most $O(\sqrt{n})$ many strips of width 1 along the same direction, and denote these strips by $\sigma_{i,j}$ for different $j$. See Figure~\ref{fig:Gridwithstrips}.

Now we shall prove $S_i\cap\sigma_{i,j}=O(k)$ with the help of the following geometric lemma.

\begin{lemma}\label{lem:strip}
Let $p$ be a point in the plane and $r_1$, $r_2$, $r_3$ and $r_4$ be distinct rays from $p$ counterclockwise in the aforementioned order with $\angle{r_ir_{i+1}}$ being called $\alpha_i$ and the closed cone defined by $r_i$ and $r_{i+1}$ by $C_i$ (the indices being counted mod 4). If $\alpha_1=\alpha_3$ and both $\alpha_2$ and $\alpha_4$ are smaller than $\pi$, then for any strip $\sigma$ of width $1$ perpendicular to the angle bisector $\ell$ of $C_2$ (and $C_4$) and containing $p$, all points of $\sigma\cap\left(C_2\cup C_4\right)$ have distance at most $\max\left\{\frac{1}{\cos{\left(\alpha_2/2\right)}},\frac{1}{\cos{\left(\alpha_4/2\right)}}\right\}$ from $p$.
\end{lemma}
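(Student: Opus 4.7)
The plan is to set up coordinates around $p$ and reduce the statement to an elementary trigonometric inequality in each of the two cones $C_2$ and $C_4$ separately.

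First, I would observe that the hypothesis $\alpha_1=\alpha_3$, combined with $\alpha_1+\alpha_2+\alpha_3+\alpha_4=2\pi$, forces the angular bisectors of $C_2$ and $C_4$ to lie along a common line through $p$ pointing in opposite directions (so the parenthetical ``(and $C_4$)'' in the statement is meaningful). I would place $p$ at the origin with this bisector line as the $x$-axis and with the bisector of $C_2$ along the positive $x$-direction. Since $\sigma$ is a strip of width $1$ perpendicular to this axis and contains $p$, it has the form $\sigma=\{(x,y):a\le x\le a+1\}$ for some $a\in[-1,0]$; in particular, every $q=(x_q,y_q)\in\sigma$ satisfies $-1\le x_q\le 1$.

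Next, for a point $q\in C_2$, I would write $q=|pq|\cdot(\cos\phi,\sin\phi)$ with $|\phi|\le\alpha_2/2$. Since $\alpha_2<\pi$, we have $\cos\phi\ge\cos(\alpha_2/2)>0$, so the strip condition $|pq|\cos\phi=x_q\le a+1\le 1$ immediately yields $|pq|\le 1/\cos(\alpha_2/2)$. Symmetrically, a point $q\in C_4$ can be written as $|pq|\cdot(-\cos\psi,\sin\psi)$ with $|\psi|\le\alpha_4/2<\pi/2$; the bound $|pq|\cos\psi=-x_q\le -a\le 1$ gives $|pq|\le 1/\cos(\alpha_4/2)$. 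Taking the maximum of the two bounds completes the proof.

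The only point requiring minor care is confirming that $p\in\sigma$ together with the width-$1$ condition really forces $a\in[-1,0]$, so that both $a+1$ and $-a$ are at most $1$; this is immediate from $a\le 0\le a+1$. Consistent with the paper's remark that the proof is obvious, I do not foresee any substantive obstacle.
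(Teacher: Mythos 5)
Your proof is correct: the verification that $\alpha_1=\alpha_3$ (together with $\sum_i\alpha_i=2\pi$) makes the bisectors of $C_2$ and $C_4$ antipodal, followed by the coordinate computation bounding $|pq|\cos\phi$ by the strip width, is exactly the elementary argument the paper has in mind when it declares the proof obvious and omits it. No gaps.
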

\begin{proof}
Let the strip $\sigma$ intersects the rays $r_1,r_2,r_3$ and $r_4$ at points $a,b,c$ and $d$, respectively. Let $\ell$ intersects $\sigma$ in cones $C_2$ and $C_4$ at points $e$ and $f$, respectively. See Figure \ref{fig:strip}.
By construction, the right triangle $\Delta peb$ is congruent to $\Delta pec$. Similarly, $\Delta pfa$ is congruent to 
$\Delta pfd$.
To prove the lemma, it is sufficient to show that any point in $\Delta peb$ is at distance at most 
$\frac{1}{\cos{(\alpha_2/2)}}$ from $p$, since a similar argument holds for other points in 
$\sigma\cap\left(C_2\cup C_4\right)$. Now, for any point $q$ in $\Delta peb$, we have $|pq|\le |pb|=\frac{|ep|}{\cos{(\alpha_2}/2)}\leq\frac{1}{\cos{(\alpha_2}/2)}$, as desired.
\end{proof}

\begin{figure}[!htb]
		\centering
		\includegraphics[scale=0.8]{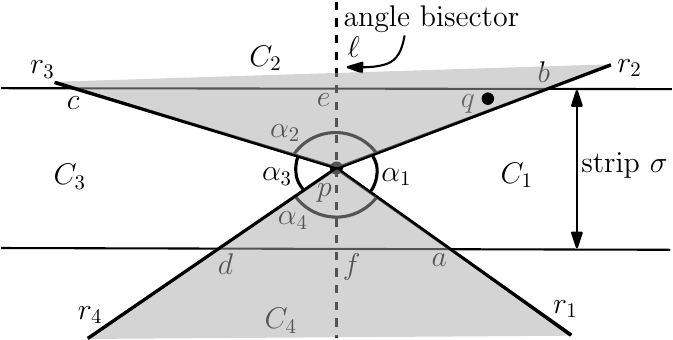}
		\caption{We shall apply the lemma with $r_1=\ell_i$, $r_2=\ell_{i+1}$, $r_3=\ell_{i+\lfloor k/2\rfloor}$, $r_4=\ell_{i+\lfloor k/2\rfloor+1}$, which makes $C_1$ equal to the sector $s_i$ and $C_3$ equal to the sector $s_{i+\lfloor k/2\rfloor}$.}
		\label{fig:strip}
\end{figure}

Now, let $p$ be the leftmost point and $q$ be the rightmost point among $S_i \cap \sigma_{i,j}$. (When $\sigma_{i,j}$ is vertical, we instead take $p$ as the lowest and $q$ as the highest.) Observe that either $q \not\in s_i(p)\cup s_{i+\lfloor k/2\rfloor}(p)$ or $p \not\in s_i(q)\cup s_{i+\lfloor k/2\rfloor}(q)$, otherwise the one that comes later in the ordering could not be in $S_i$. We argue that the distance between $p$ and $q$ is at most \begin{equation*}
    D := \frac{1}{\cos\left(\pi\cdot(\lceil k/2\rceil-1)/k\right)}.
\end{equation*} If $q \not\in s_i(p)\cup s_{i+\lfloor k/2\rfloor}(p)$, we apply Lemma~\ref{lem:strip} at $p$ with $r_1=\ell_i$, $r_2=\ell_{i+1}$, $r_3=\ell_{i+\lfloor k/2\rfloor}$, $r_4=\ell_{i+\lfloor k/2\rfloor+1}$, and $\sigma = \sigma_{i,j}$ to reach this conclusion; if $p \not\in s_i(q)\cup s_{i+\lfloor k/2\rfloor}(q)$, we apply Lemma~\ref{lem:strip} at $q$ instead, and deduce that the distance between $p$ and $q$ is at most $D$ either way. According to how $p$ and $q$ are chosen, this means that $S_i\cap \sigma_{i,j}$ is contained in a rectangle of size $1 \times D$. Also, since any two points in $P$ have distance larger than 1 by our construction, such a rectangle can contain at most $2 \lceil D\rceil$ points from $P$. Finally, one can check that $D=O(k)$ using elementary trigonometry, thus $S_i \cap \sigma_{i,j} \leq O(k)$, and thereby we conclude the proof.

\section{Concluding remarks} \label{sec:conc}

Our Theorems~\ref{thm:max_deg_Yao},~\ref{thm:maxminEG} and~\ref{maxminomegaG} leave room for improvements. For the first, it would be interesting to find out whether our simple bounds $\frac{1}{2}(n-1)\le d_3(n)$ and $d_5(n) \le n-1$ are tight or not.

In the setting of Theorem~\ref{maxminomegaG}, we suspect that the upper bound $w_k \le \left\lceil\frac{k}{2}\right\rceil+1$ may be tight since we confirmed it for $k=3,4$. However, we also showed that $w_k\left(\left\lceil\frac{k}{2}\right\rceil+1\right)=\left\lceil\frac{k}{2}\right\rceil$ for $k\ge4$. In other words, a point set for which a clique of size $\left\lceil\frac{k}{2}\right\rceil+1$ is unavoidable contains more than $\left\lceil\frac{k}{2}\right\rceil+1$ points.

In order to extend our results to point sets that are not necessarily in general position, we need to specify how ordered Yao graphs are constructed for such point sets. For this purpose, we need to handle two issues: a point $q$ being on the sector bounding ray of a point $p$; two points $q$ and $q'$ having the same distance to a point $p$. While there are no standard conventions, and perhaps the best practice depends on the potential applications of ordered Yao graphs, we still make some conventions from a mathematical point of view. We shall include the sector bounding ray $\ell_i(p)$ into the sector $s_i(p)$, hence a point $q$ in $\ell_i(p) \setminus p$ will be considered in the sector $s_i(p)$. For points $q$ and $q'$ both having the smallest distance to $p$ in $s_i(p)$, we shall break the tie arbitrarily, in particular, the ordered Yao graph construction now also involves tie-breaking choices.

Our results are of two kinds: a construction proof with a fixed point set $P$ such that the ordered Yao graph satisfies a certain bound regardless of the order on $P$; an argument proof for an arbitrary point set $P$ stating that there exists a suitable order on $P$ making the ordered Yao graph satisfying a certain bound. For the former kind of construction proofs, we gave point sets in general position, so their corresponding results naturally hold in the more general setting.

We explain why the statements corresponding to the latter argument proofs still hold for all point sets (pending suitable tie-breaking choices). Suppose $P$ is an arbitrary point set that is not necessarily in general position, we can slightly rotate $P$ counterclockwise to obtain $P'$ such that no point lies on the sector bounding ray of another point, and slightly perturb $P'$ to obtain $P''$ such that $P''$ contains no isosceles triangles. We can ensure that a point $q$ lies in the same sector of another point $p$ before and after the rotation and perturbation, because the rotation from $P$ to $P'$ is counterclockwise and both the rotation and the perturbation are very small. The perturbation from $P'$ to $P''$ essentially specifies a set of potential tie-breaking choices: if $q$ is closer to $p$ than $q'$ in $P''$, then $q$ is given priority than $q'$ in $P$ when a tie-breaking is needed. Now, if we have a statement saying there exists a suitable ordering on $P''$ such that the corresponding ordered Yao graph satisfies a certain bound, then using the same ordering and the tie-breaking choices inscribed in $P''$, we can construct the same ordered Yao graph on $P$, where the same bound is satisfied. This allows us to extend the results of this paper to point sets not necessarily in general position.

\paragraph{Acknowledgements.}
The authors would like to thank the organizers of the Focused Week on Geometric Spanners 
(Oct 23, 2023 -- Oct 29, 2023) at the Erd\H os Center, Budapest, where this joint work began. Research partially supported by ERC Advanced Grants `GeoScape' No. 882971 and `ERMiD' No. 101054936, by the Erd\H os Center, by the Ministry of Innovation and Technology NRDI Office within the framework of the Artificial Intelligence National Laboratory (RRF-2.3.1-21-2022-00004) and by grant no. 23-04949X of the Czech Science Foundation (GA\v{C}R). We are grateful to Géza Tóth for a helpful discussion on the result of Section~\ref{edges_lower}. We also thank the anonymous reviewers whose detailed reports helped to improve the paper.

\end{document}